\newtheorem{thm}{Theorem}[section]
\newtheorem{dfn}[thm]{Definition}
\newtheorem{lem}[thm]{Lemma}
\newtheorem{prop}[thm]{Proposition}
\theoremstyle{remark}
\DeclareMathOperator{\scal}{Scal}
\DeclareMathOperator{\ric}{Ric}
\DeclareMathOperator{\Lop}{L}
\DeclareMathOperator{\vol}{vol}
\DeclareMathOperator{\Id}{I}
\DeclareMathOperator{\id}{id}
\newcommand{\R}{\mathrm{R}}
\newcommand{\ps}[2]{\left\langle #1,#2 \right\rangle}
\newcommand{\eps}{\varepsilon}
\newcommand{\ph}{\varphi}
\begin{document}

\title{Stability of nonnegative isotropic curvature under continuous
  deformations of the metric}

\author{Thomas Richard\footnote{Université Paris-Est, Laboratoire
    d'Analyse et de Mathématiques Appliquées (UMR 8050), UPEM, UPEC,
    CNRS, F-94010, Créteil, France.\newline
  \emph{e-mail:} thomas.richard@u-pec.fr}}
\maketitle
\begin{abstract}
  Using a method introduced by R. Bamler to study the behavior of
  scalar curvature under continuous deformations of Riemannian metrics, we prove that if a
  sequence of smooth Riemannian metrics $g_i$ on a fixed compact
  manifold $M$ has isotropic curvature bounded from below by a
  nonnegative function $\underline{u}$, and if $g_i$
  converge in $C^0$ norm to a smooth metric $g$, then $g$ has
  isotropic curvature bounded from below by $\underline{u}$. The proof
  also works for various other bounds from below on the curvature,
  such has non-negative curvature operator.
\end{abstract}

A major trend in modern Riemannian geometry is to understand 
the geometry of a metric space $(X,d)$ which is a limit of a sequence of smooth
manifolds $(M_i,g_i)_{i\in\mathbb{N}}$ in terms of given geometric
information on the elements of the sequence $(M_i,g_i)_{i\in\mathbb{N}}$. In a lot of
cases $(M_i,g_i)_{i\in\mathbb{N}}$ is supposed to converge to $(X,d)$
in the Gromov--Hausdorff (abbreviated as GH) sense. In this context one can show the
following results:
\begin{itemize}
\item If $(M_i,g_i)_{i\in\mathbb{N}}$ GH converges to a smooth
  manifold $(M,g)$ and each $(M_i,g_i)$ has sectionnal curvature
  greater than $K$, then $(M,g)$ has sectionnal curvature greater than
  $K$. This follows from the synthetic theory of lower sectionnal
  curvature bounds as exposed in \cite{MR1835418}.
\item If $(M^n_i,g_i)_{i\in\mathbb{N}}$ GH converges to a smooth
  manifold $(M^n,g)$ and each $(M^n_i,g_i)$ has Ricci curvature
  greater than $K$, then $(M^n,g)$ has Ricci curvature greater than
  $K$. Note that collapsing is ruled out in this case. The result is a
  consequence of the theory of $CD(K,N)$ spaces (see \cite{MR2480619}
  and \cite{MR2118836}).
\end{itemize}

The case of lower bounds on the scalar curvature is much less
clear. As the example of the product of a radius $1/i$ $2$-sphere with a negative scalar curvature manifold shows, a result
as above cannot be true under the sole assumption of Gromov-Hausdorff
convergence, at least if collapsing is allowed. Example 6.1 in the
preprint \cite{2017arXiv170300984B} is actually a 3 dimensional singular limit of a
sequence of 3-manifolds with positive scalar curvature which cannot be
reasonably thought of as having positive scalar curvature.
 
In \cite{MR3201312},
Gromov layed the first bricks of a possible synthetic theory of lower bounds on
scalar curvature and proved :
\begin{thm}[Gromov \cite{MR3201312}]
  Let $M$ be a smooth manifold, $\underline{u}:M\to\mathbb{R}$ be a
  continuous function, and $(g_i)_{i\in\mathbb{N}}$ be a
  sequence of possibly non-complete $C^2$ Riemannian metrics. Assume that:
  \begin{itemize}
  \item For every $i\in\mathbb{N}$, $\scal_{g_i}(x)\geq \underline{u} (x)$.
  \item $g_i$ converges in $C^0$ norm to a $C^2$ Riemannian metric
    $g$. 
  \end{itemize}
  Then $\scal_g(x)\geq \underline{u}(x)$.
\end{thm}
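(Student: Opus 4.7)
The conclusion is pointwise, so it suffices to argue near a fixed $x_0\in M$, working in a relatively compact neighborhood on which all of the $g_i$ and $g$ are defined (extending to a closed manifold if needed). Following Bamler's method, the idea is to run the Ricci--DeTurck flow with $g$ as the fixed background, simultaneously smoothing the $g_i$ into smooth deformations that converge to the Ricci flow starting from $g$. The scalar curvature lower bound is (approximately) preserved along this parabolic smoothing, and letting the time variable tend to $0$ then recovers the bound for $g$ itself.

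\emph{Step 1 — smoothing by Ricci--DeTurck flow.} For each $i$, solve $\partial_t\tilde{g}_i=-2\ric_{\tilde{g}_i}+\mathcal{L}_{V_i}\tilde{g}_i$ with $\tilde{g}_i(0)=g_i$, where $V_i$ is the DeTurck vector field determined by $g$. Koch--Lamm-type stability estimates make this strictly parabolic equation well-posed for initial data only $C^0$-close to $g$, and on a uniform interval $[0,T]$ the solutions $\tilde{g}_i(t)$ converge in $C^\infty_{\mathrm{loc}}$ on $(0,T]$ to the smooth Ricci flow $g(t)$ starting from $g$. Each $\tilde{g}_i(t)=\Phi_i(t)^*g_i(t)$ for diffeomorphisms $\Phi_i(t)$ converging to $\id$, where $g_i(t)$ is the honest Ricci flow with initial datum $g_i$.

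\emph{Step 2 — propagating the scalar curvature bound.} Along the Ricci flow, $\partial_t\scal=\Delta\scal+2|\ric|^2\geq\Delta\scal$, so the parabolic maximum principle preserves global infima of $\scal$. Since $\underline{u}$ is continuous, on a small ball around $x_0$ it exceeds $\underline{u}(x_0)-\varepsilon$; localizing the maximum principle there, and using that $\Phi_i(t)\to\id$ uniformly to keep track of base points, yields
\[
\scal_{\tilde{g}_i(t)}(x_0)\geq\underline{u}(x_0)-\eta(t,\varepsilon),
\]
uniformly in $i$, with $\eta(t,\varepsilon)\to 0$ as $t\to 0^+$ and $\varepsilon\to 0$.

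\emph{Step 3 — passage to the limits.} For fixed small $t>0$, the $C^\infty_{\mathrm{loc}}$ convergence from Step 1 gives $\scal_{\tilde{g}_i(t)}(x_0)\to\scal_{g(t)}(x_0)$, so $\scal_{g(t)}(x_0)\geq\underline{u}(x_0)-\eta(t,\varepsilon)$. Since $g$ is $C^2$, $\scal_{g(t)}(x_0)\to\scal_g(x_0)$ as $t\to 0$, whence $\scal_g(x_0)\geq\underline{u}(x_0)$. The main obstacle is the analytic input of Step 1: well-posedness and $C^\infty$ stability of the Ricci--DeTurck flow for only $C^0$-close initial data, via the Koch--Lamm framework. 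Once that is in place, Steps 2 and 3 are comparatively soft, the remaining care being the localization of the maximum principle so as to respect the pointwise nature of the lower bound $\underline{u}$.
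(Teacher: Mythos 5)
Your outline is essentially Bamler's Ricci-flow proof of Gromov's theorem, which is precisely the proof this paper cites for the statement (\cite{zbMATH06609371}); the paper itself does not reprove it, and its own machinery (the proof of Theorem \ref{thm:mainthm}) is a variant adapted to curvature cones. The two routes diverge at exactly one point: how the non-constant lower bound $\underline{u}$ and the lack of compactness are handled. You freeze $\underline{u}$ near $x_0$ at the level $\underline{u}(x_0)-\eps$ and then invoke a \emph{localized} maximum principle for $\partial_t\scal\geq\Delta\scal$ on a small ball, which is Bamler's original strategy and is what makes the argument work for non-complete metrics on a non-compact $M$. The paper instead works globally on a compact manifold and couples the curvature bound to a companion heat flow: it evolves $\underline{u}$ by the (drift) heat equation alongside the Ricci--DeTurck flow, propagates $\R_{g_i(t)}-u_i(t)\Id\in\mathcal{C}$ by Hamilton's maximum principle (Proposition \ref{prop:CurvHeatFlow}), and recovers $\underline{u}$ at $t=0$ via the Hölder estimate $\sup_M|u(t)-\underline{u}|\leq Ct^{\beta}$ proved with Hessian bounds (Proposition \ref{HolderHeatFlow}). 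Your approach buys locality and hence the full strength of Gromov's statement (non-compact $M$, incomplete metrics); the paper's approach buys applicability to tensorial lower bounds, where pointwise freezing of $\underline{u}$ does not interact well with Hamilton's cone-preservation argument, at the cost of requiring compactness and $\underline{u}\geq 0$.

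The one step you should not treat as soft is the localization in Step 2. On an incomplete ball there is no boundary control on $\scal_{\tilde g_i(t)}$, so the naive maximum principle does not apply; Bamler makes this work through a Duhamel representation and Gaussian heat-kernel bounds for the conjugate heat operator, showing that the contribution from outside the ball is $o(1)$ as $t\to 0$ uniformly in $i$. This is exactly the step the present paper says it could not reproduce in the tensor setting (hence its compactness hypothesis), so your $\eta(t,\eps)$ needs that heat-kernel (or barrier) input spelled out; once it is, the rest of your plan, including the diffeomorphism bookkeeping $\scal_{\tilde g_i(t)}(x)=\scal_{g_i(t)}(\Phi_i(t)(x))$ and the $t\to 0$ limit for the $C^2$ metric $g$, goes through.
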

Here and in the rest of the paper, by ``$g_i$ converges in $C^0$ norm
to $g$'' we will mean that $\sup_M|g_i-g|$ goes to zero as $i$ goes to
infinity where the pointwise norm of the $2$-tensors $g_i-g$ is taken
with respect to a fixed background Riemannian metric.

Gromov's proof uses arguments from the theory of minimal
hypersurfaces. In \cite{zbMATH06609371}, Bamler gave an alternative
proof of this result using Ricci flow. In this paper we will adapt
Bamler's method to other type of curvature lower bounds and show the following theorem:

\begin{thm}\label{thm1}
  Let $M$ be a compact smooth manifold and $\underline{u}$ be a
  lower semi-continuous nonnegative function on $M$. Let $g_i$ be a sequence of
  complete smooth metrics with bounded curvature on $M$ which
  converges in $C^0$ norm to a smooth metric $g$. Then:
  \begin{itemize}
  \item if for every $i\in\mathbb{N}$ the lowest eigenvalue of the curvature operator of $g_i$
    at any $x\in M$ is bigger than $\underline{u}(x)$, then the lowest eigenvalue of the curvature operator of $g$
    at any $x\in M$ is bigger than $\underline{u}(x)$.
  \item if for every $i\in\mathbb{N}$ the isotropic curvature of $g_i$
    at any $x\in M$ is bigger than $\underline{u}(x)$, then the
    isotropic curvature\footnote{We say that $(M,g)$ has isotropic
      curvature greater than $k$ at $x\in M$ if, for any orthonormal $4$-frame
      $(e_1,\dots,e_4)$ of $T_xM$,
      $R(e_1,e_3,e_1,e_3)+R(e_1,e_4,e_1,e_4)+R(e_2,e_3,e_2,e_3)
      +R(e_2,e_4,e_2,e_4)-2 R(e_1,e_2,e_3,e_4)\geq k$. See Appendix
      $B$ for more details. } of $g$
    at any $x\in M$ is bigger than $\underline{u}(x)$.
  \end{itemize}
  Then $\R_g -\underline{u}\Id\in\mathcal{C}$.
\end{thm}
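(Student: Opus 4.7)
The plan is to adapt Bamler's Ricci flow argument from scalar curvature to the tensorial setting. The two cones under consideration---non-negative curvature operators and non-negative isotropic curvature---are closed convex cones invariant under Hamilton's ODE $\dot R = Q(R):=R^2+R^{\#}$; this is classical (Hamilton) in the first case and due to Brendle--Schoen and Nguyen in the second. Combined with the tensorial maximum principle, this ODE invariance is propagated along Ricci flow.

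First I would run Ricci flows $g_i(\cdot)$ from each $g_i$ and $g(\cdot)$ from $g$. Each $g_i$ is smooth with bounded curvature on the compact manifold $M$, so Shi's theorem gives short-time existence; combined with the $C^0$ convergence of the initial data via Bamler's $C^0$-stability argument, this yields a common existence interval $[0,T]$ on which, by parabolic smoothing, $g_i(t)\to g(t)$ smoothly on $M$ for every $t\in(0,T]$.

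The heart of the proof is the pointwise inclusion
\[
\R_{g_i(t)}(x) - \bigl(P^{g_i}_t \underline{u}\bigr)(x)\,\Id \ \in\ \mathcal{C},\qquad x\in M,\ t\in(0,T],
\]
where $P^{g_i}_t$ is the scalar heat semigroup along the evolving metric $g_i(\cdot)$ and $\Id$ is normalized so that $\R - k\Id \in \mathcal{C}$ encodes exactly the scalar lower bound ``$\geq k$'' on the relevant curvature quantity (the lowest eigenvalue of $\R$, respectively the isotropic curvature). In Uhlenbeck's frame one has $(\partial_t-\Delta)\R = Q(\R)$ and $(\partial_t-\Delta)\bigl((P_t\underline{u})\Id\bigr)=0$, so the shifted tensor $T:=\R-(P_t\underline{u})\Id$ satisfies $(\partial_t-\Delta)T = Q(\R)$. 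The cone-valued maximum principle then applies: at any space-time point where $T$ first touches $\partial\mathcal{C}$, both $T$ and the non-negative multiple $(P_t\underline{u})\Id$ of $\Id$ lie in $\mathcal{C}$, so $\R = T+(P_t\underline{u})\Id$ does as well, whence $Q(\R)\in\mathcal{C}$ by cone invariance. Hence the initial inclusion $T(0)\in\mathcal{C}$ propagates to all $t\in(0,T]$.

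Finally I would pass to the limit $i\to\infty$ at fixed $t\in(0,T]$, using the smooth convergence $g_i(t)\to g(t)$ and the corresponding convergence of the heat kernels, to obtain $\R_{g(t)}(x)-(P^g_t\underline{u})(x)\Id\in\mathcal{C}$; then letting $t\to 0^+$, $\R_{g(t)}(x)\to\R_g(x)$ smoothly while the lower semicontinuity of $\underline{u}$ yields $\liminf_{t\to 0^+}(P^g_t\underline{u})(x)\geq\underline{u}(x)$, so the closedness of $\mathcal{C}$ gives the desired conclusion. The main obstacle is the cone-valued maximum principle for the shifted tensor: it requires the stronger property that $Q$ maps $\mathcal{C}$ into itself (not merely that $\mathcal{C}$ be ODE-invariant), which in the isotropic curvature case is precisely the content of the Brendle--Schoen and Nguyen theorems. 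Secondary difficulties are extracting a common short time $T$ under only $C^0$ control of the initial data, and controlling the heat semigroups along the $C^0$-close evolving metrics $g_i(\cdot)$.
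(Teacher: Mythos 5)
Your overall architecture matches the paper's (Ricci/Ricci--DeTurck flow from each $g_i$, a heat-flow lower bound propagated by the tensor maximum principle, then $i\to\infty$ followed by $t\to 0$), but the step you yourself identify as the heart of the proof contains a genuine gap. You need $(\partial_t-\Delta)T=Q(\R)$ to lie in $T_{T}\mathcal{C}$ at a point where $T=\R-(P_t\underline{u})\Id$ touches $\partial\mathcal{C}$, and you derive this from the claim that $Q$ maps $\mathcal{C}$ into itself, attributing that claim to Brendle--Schoen and Nguyen. That is not what their theorems say: they establish only the \emph{ODE invariance} (null-vector condition) $Q(\R)\in T_{\R}\mathcal{C}$ for $\R\in\mathcal{C}$. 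This weaker statement is useless in your situation, because $\R=T+(P_t\underline{u})\Id$ sits in the \emph{interior} of $\mathcal{C}$ (it is a boundary point plus a nonnegative multiple of $\Id$, which is interior), where $T_{\R}\mathcal{C}$ is the whole space; and $T_{\R}\mathcal{C}$ is in any case not contained in $T_{T}\mathcal{C}$. The stronger property $Q(\mathcal{C})\subset\mathcal{C}$ is true for the cone of nonnegative curvature operators ($\R\geq 0$ implies $\R^2\geq0$ and $\R^{\#}\geq0$), so that half of the theorem survives, but it is not known (and should not be expected) for the NNIC cone, which is the case the theorem is really about.

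The missing idea is to polarize the quadratic term rather than evaluate it at $\R$: writing $u=P_t\underline u\geq 0$,
\[
Q(T+u\Id)=Q(T)+2u\,Q(T,\Id)+u^{2}Q(\Id)=Q(T)+2u\,\ric(T)\wedge\id+(n-1)u^{2}\Id .
\]
The first term lies in $T_{T}\mathcal{C}$ by ODE invariance, the last is a nonnegative multiple of $\Id\in\mathcal{C}\subset T_T\mathcal{C}$, and the middle term is where new input is required: one must show that $T\in\mathcal{C}$ implies $\ric(T)\wedge\id\in\mathcal{C}$. For NNIC this follows from two elementary but non-obvious facts (NNIC implies $\ric$ is $4$-nonnegative, and $A\wedge\id$ is NNIC whenever $A$ is $4$-nonnegative); this is exactly the paper's condition $(\ast)$ and the content of its Appendix B, and it is the ingredient your argument lacks. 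A secondary, fixable imprecision: you run the unmodified Ricci flows of the $g_i$ and assert smooth convergence to the flow of $g$ from $C^0$ convergence of the initial data; the stability statement actually lives at the level of the Ricci--DeTurck flow with background $g(t)$ (Koch--Lamm), and one must pass back through the DeTurck diffeomorphisms, which also enter the drift term of the heat equation you use to define $P^{g_i}_t\underline u$ and the proof that $P^{g_i}_t\underline u$ stays uniformly close to $\underline u$ as $t\to0$.
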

This will actually follow from the more general Theorem
\ref{thm:mainthm} below.

To state it, we need to recall some notation: let $A$ and
$B$ be two symmetric tensors on an inner product space $E$, we define
the algebraic curvature tensor $A\wedge B:\Lambda^2E\to\Lambda^2E$ by:
\[(A\wedge B)(x\wedge y)=\tfrac{1}{2}\left ( Ax\wedge By+Bx\wedge Ay\right).\]
We will also denote by $\Id:\Lambda^2E\to\Lambda^2E$ the identity operator.
\begin{dfn}
  A curvature cone $\mathcal{C}\subset S^2_B\Lambda^2\mathbb{R}^n$ is said to satisfy condition $(\ast)$ if:
  \begin{itemize}
  \item $\mathcal{C}$ is Ricci flow invariant.
  \item If $\R\in\mathcal{C}$ then $\ric(\R)\wedge \id\subset\mathcal{C}$.
  \end{itemize}
\end{dfn}

The notion of Ricci flow invariant curvature cone is recalled in
appendix \ref{sec:curv-cones-ricci}. 

We mention here two examples of curvature cones which satisfy conditon
$(\ast)$ :
\begin{itemize}
\item The cone of curvature operators with
  positive isotropic curvature satisfies $(\ast)$. This
  follows from \cite{MR2587576} or \cite{MR2449060} and the two lemmas
  proven in the appendix. 
\item if $\mathcal{C}$ is Ricci flow invariant and $\{\R\geq 0\}\subset\mathcal{C}\subset\{\ric(\R)\geq 0\}$,
  then it satisfies condition $(\ast)$: indeed the eigenvalues of
  $\ric(\R)\wedge\id$ are $\tfrac{\lambda_i+\lambda_j}{2}$ where the
  $\lambda_i$'s are the eigenvalues of $\ric(\R)$ which are
  nonnegative by assumption. This includes the cones of
  nonnegative curvature operators, $2$-nonnegative curvature operators
  as well as the NNIC1 and NNIC2 cones. 
\end{itemize}

\begin{thm}\label{thm:mainthm}
  Let $\mathcal{C}$ be a curvature cone which satisfies condition
  $(\ast)$. Let $M$ be a compact smooth manifold and $\underline{u}$ be a
  lower semi-continuous nonnegative function on $M$. Let $g_i$ be a sequence of
  complete smooth metrics with bounded curvature on $M$ such that:
  \begin{itemize}
  \item As $i$ goes to infinity, $g_i$ converges in $C^0$ norm to a smooth metric $g$.
  \item For every $i$, the curvature operator of $g_i$ satisfies $\R_{g_i}-\underline{u}\Id\in\mathcal{C}$.
  \end{itemize}
  Then $\R_g -\underline{u}\Id\in\mathcal{C}$.
\end{thm}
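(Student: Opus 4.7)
The plan is to adapt Bamler's Ricci-flow approach from the scalar curvature setting: run short-time Ricci flows from each $g_i$ and from $g$, use $C^0$-stability to force smooth convergence $g_i(t)\to g(t)$ on $(0,T]$, preserve the cone condition on $\R-u\Id$ along each flow (after mollifying $u$ via the heat equation), and finally pass $i\to\infty$ followed by $t\to 0$ and $u\uparrow\underline{u}$. Short-time existence of each flow is standard since all metrics are smooth with bounded curvature on compact $M$; the key preliminary is $C^0$-stability (Simon, Koch--Lamm, or Bamler's own Ricci--Deturck setup), which, using smoothness of the limit $g$, provides a uniform lifespan $T>0$ and smooth convergence $g_i(t)\to g(t)$ on $M\times(0,T]$.

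Since $\underline{u}$ is lower semi-continuous and nonnegative, approximate it pointwise from below by smooth nonnegative functions $u\leq\underline{u}$. Using $\Id\in\mathcal{C}$ (which holds for all cones covered by condition $(\ast)$ in practice), the hypothesis yields $\R_{g_i}-u\Id=(\R_{g_i}-\underline{u}\Id)+(\underline{u}-u)\Id\in\mathcal{C}$. On each flow let $u_i(\cdot,t)$ solve $\partial_t u_i=\Delta_{g_i(t)}u_i$ with initial data $u$, and set $X_i(t):=\R_{g_i(t)}-u_i(t)\Id$. Hamilton's evolution for $\R$ together with the heat equation for $u_i$ gives
\[\partial_t X_i=\Delta X_i+2\bigl(\R^2+\R^\#\bigr),\]
where $\R=X_i+u_i\Id$. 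Expanding the reaction term with the standard algebraic identity expressing $\R\#\Id$ via $\ric(\R)\wedge\id$ (and $\Id^\#$ a nonnegative multiple of $\Id$), one checks that at any supporting functional $\ell\geq 0$ of $\mathcal{C}$ with $\ell(X_i)=0$, the reaction term has nonnegative $\ell$-value: the pure $Q(X_i)$ piece by Ricci-flow-invariance of $\mathcal{C}$, the $u_i$-linear piece by condition $(\ast)$ applied to $X_i$ (producing a $\ric(X_i)\wedge\id$ term in $\mathcal{C}$), and the $u_i^2$-piece by $u_i\geq 0$ together with $\Id\in\mathcal{C}$. Hamilton's maximum principle for systems then gives $X_i(t)\in\mathcal{C}$ for all $t\in[0,T]$.

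Fixing $t>0$, smooth convergence of the flows gives $\R_{g_i(t)}\to\R_{g(t)}$, and a standard parabolic stability argument on the evolving metrics gives $u_i(t)\to u_g(t)$, where $u_g$ solves $\partial_t u_g=\Delta_{g(t)}u_g$ with initial data $u$. Closedness of $\mathcal{C}$ then yields $\R_{g(t)}-u_g(t)\Id\in\mathcal{C}$. Letting $t\to 0$ and using smoothness of $g$ together with continuity of the heat evolution at $t=0$, one obtains $\R_g-u\Id\in\mathcal{C}$ pointwise. As $\underline{u}$ is l.s.c.\ and nonnegative, it is the pointwise supremum of smooth nonnegative $u\leq\underline{u}$, and a final use of closedness of $\mathcal{C}$ gives $\R_g-\underline{u}\Id\in\mathcal{C}$. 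The main obstacle is the algebraic verification in the second paragraph: the cross terms generated by the shift $u_i\Id$ must combine, via the second clause of condition $(\ast)$, into an element of $\mathcal{C}$ on every supporting functional at $X_i\in\partial\mathcal{C}$; this is precisely why the hypothesis $\ric(\R)\wedge\id\in\mathcal{C}$ is imposed. A secondary technical point is the uniform-in-$i$ smooth convergence of the heat-evolved $u_i$ on the evolving metrics, needed to extract the limit $u_g$.
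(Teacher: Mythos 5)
Your proposal follows essentially the same route as the paper: reduce to a smooth $u\le\underline u$, flow the $g_i$ by a Ricci--DeTurck-type flow over the Ricci flow of $g$, couple with a heat flow of $u$, preserve $\R-u\Id\in\mathcal C$ via condition $(\ast)$ and Hamilton's maximum principle (your decomposition of the reaction term into $Q(X)$, a $u\,\ric(X)\wedge\id$ piece, and a $u^2\Id$ piece is exactly the paper's Proposition~\ref{prop:CurvHeatFlow}), then let $i\to\infty$, $t\to0$, and $u\uparrow\underline u$.

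The one step you should not file under ``secondary technical point'' is the limit $t\to0$. The limit function $u_g$ is produced only on $(0,T)\times M$ as a locally uniform limit of the $u_i$; nothing in that construction guarantees it attains the initial datum $u$ as $t\to0$, and ``continuity of the heat evolution at $t=0$'' is not automatic here because the coefficients are only controlled by $|\R_{g_i(t)}|\le A/t$ and $|W|\le B/\sqrt t$ near $t=0$. What is needed is a Hölder bound $\sup_M|u_i(t)-u|\le Ct^\beta$ \emph{uniform in $i$}; the paper obtains it (Propositions~\ref{prop:Hessian} and~\ref{HolderHeatFlow}) from the gradient estimate $\sup|\nabla u_i(t)|\le\sup|\nabla u|$ together with a Hessian estimate $\sup|\nabla^2u_i(t)|\le C t^{-\alpha}$ for some $\alpha<1$ (maximum principle applied to $t^{1+\delta}|\nabla^2u|^2+|\nabla u|^2$ under the $A/t$ curvature bound), and then by integrating $|\partial_tu_i|\le\sqrt n\,|\nabla^2u_i|+|W|\,|\nabla u_i|$ in time; this bound survives the limit and gives $u_g(t)\to u$ uniformly. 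A related point of care: as written, your $g_i(t)$ cannot simultaneously be the genuine Ricci flow of $g_i$ (which is what the plain heat equation and Hamilton's maximum principle require) and converge to $g(t)$ in a fixed gauge. One runs the Ricci--DeTurck flow, includes the drift term $\ps{W}{\nabla u_i}$ in the equation for $u_i$, and pulls back by the DeTurck diffeomorphism only at the moment the maximum principle is invoked.
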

Picking for $\mathcal{C}$ the NNIC cone or the cone of nonnegative
curvature operators, we recover Theorem \ref{thm1}

The proof of this result follows roughly Bamler's proof in
\cite{zbMATH06609371} with some differences:
\begin{itemize}
\item the evolution equation for the curvature operator along the
  Ricci flow is not as nice as the evolution equation for the scalar
  curvature, this is why we have not been able to handle the case of a
  lower bound $\underline{u}$ of arbitrary sign.
\item we require compactness of the manifold $M$ because we are not
  able to localize the argument as Bamler did. Our proof would work if
  we assumed completeness and bounded curvature of all the metrics
  involved, in order to be able to apply the maximum principle.
\item we study the behavior of the heat flow as $t$ goes to $0$ using
  hessian estimates based on the maximum principle, whereas Bamler
  uses heat kernel estimates. This choice has been made to make the
  proof more self-contained.
\end{itemize}

\subsubsection*{Acknowledgments.}
\label{sec:acknowledgments}

The author thanks Alix Deruelle for useful discussions during the
preparation of this paper.

\section{Bounding the curvature from below by the heat flow}
\label{sec:bound-curv-from}

\begin{prop}\label{prop:CurvHeatFlow}
  Let $\mathcal{C}$ be a curvature cone which satisfy condition
  $(\ast)$. Let $(M,g(t))_{t\in[0,T)}$ be a solution to the Ricci flow and
  $u(t,\cdot)$ be a nonnegative solution to the heat equation such that:
    \[\R_{g(0)}-u(0,\cdot)\Id\in\mathcal{C}\]
    Then
    \[\R_{g(t)}-u(t,\cdot)\Id\in\mathcal{C}\]
    for all $t\in[0,T)$
\end{prop}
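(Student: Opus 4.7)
The plan is to apply Hamilton's tensorial maximum principle to the auxiliary curvature tensor $S(t,\cdot):=\R_{g(t)}-u(t,\cdot)\Id$, with the goal of showing $S(t,x)\in\mathcal{C}$ pointwise for all $t\in[0,T)$ and $x\in M$. Under Ricci flow the curvature operator satisfies $\partial_t\R=\Delta\R+2Q(\R)$ with $Q(\R):=\R^2+\R^\#$, and by hypothesis $\partial_t u=\Delta u$. Since $\Id$ is the metric-independent identity on $\Lambda^2$, one has $\Delta(u\Id)=(\Delta u)\Id$, and subtracting the two evolutions yields
\[\partial_t S \;=\; \Delta S \;+\; 2\,Q(\R).\]

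I would then decompose the reaction term. Substituting $\R=S+u\Id$ and expanding via the polar form $Q(X,Y)=\tfrac{1}{2}(XY+YX)+X\#Y$, together with the B\"ohm--Wilking-type identity $\R\#\Id=\ric(\R)\wedge\id-\R$ and the computation $Q(\Id)=\Id+\Id^\#=(n-1)\,\Id$, the $uS$ cross terms collapse and one obtains the clean identity
\[Q(\R) \;=\; Q(S)\;+\;2u\,\bigl(\ric(S)\wedge\id\bigr)\;+\;u^2(n-1)\,\Id.\]
This identity is the algebraic heart of the proof and explains the shape of condition $(\ast)$: Ricci-flow invariance alone takes care of $Q(S)$, while the $u$-dependent correction is precisely a positive multiple of $\ric(S)\wedge\id$, which the second half of $(\ast)$ is designed to keep in $\mathcal{C}$.

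To finish I would invoke Hamilton's maximum principle for closed convex cones of curvature operators. Assume toward contradiction that $S$ first reaches $\partial\mathcal{C}$ at some $(x_0,t_0)$, and let $\ell$ be any supporting linear functional to $\mathcal{C}$ at $S(x_0,t_0)$. It suffices to verify $\ell\bigl(2Q(\R)\bigr)\geq 0$: the first summand is nonnegative by Ricci-flow invariance (the ODE version of the first half of $(\ast)$); the second is nonnegative because $u\geq 0$ and $\ric(S)\wedge\id\in\mathcal{C}$ by the second half of $(\ast)$; the third is nonnegative since for all cones of interest $\{\R\geq 0\}\subset\mathcal{C}$, whence $\Id\in\mathcal{C}$. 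Compactness of $M$ (or completeness with bounded curvature, invoked elsewhere in the paper) validates the first-contact argument. The main obstacle I anticipate is precisely the algebraic identity above, in particular verifying $\R\#\Id=\ric(\R)\wedge\id-\R$ with the correct signs and coefficients so that the $uS$ terms cancel; once this is in place the remainder is a direct application of the tensor maximum principle in its standard form.
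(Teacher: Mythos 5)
Your proposal is correct and follows essentially the same route as the paper: the same decomposition $Q(\R)=Q(S)+2u\,\ric(S)\wedge\id+(n-1)u^2\Id$ via $Q(S,\Id)=\ric(S)\wedge\id$, with condition $(\ast)$ handling the cross term and Ricci-flow invariance handling $Q(S)$, concluded by Hamilton's tensor maximum principle (your supporting-functional first-contact argument is just the standard proof of that principle spelled out). The only cosmetic point is that $\Id\in\mathcal{C}$ needs no extra hypothesis such as $\{\R\geq 0\}\subset\mathcal{C}$: the paper's definition of a curvature cone already requires $\Id$ to lie in the interior of $\mathcal{C}$.
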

\begin{proof}
  We will apply Hamilton's maximum principle (see \cite{MR862046}, Theorem
  4.3) to $\Lop(t)=\R_{g(t)}-u(t,\cdot)\Id$. It satisfies the
  evolution equation:
  \begin{align*}
    (\partial_t-\Delta)\Lop&=2Q(\R)\\
    &=2Q(\Lop)+4u\, Q(\Lop,\Id)+2u^2Q(\Id)\\
    &=2Q(\Lop)+4u\ric(\Lop)\wedge\id+2(n-1)u^2\Id
  \end{align*}
  where $Q(\R)=\R^2+\R^\#$ and we have used the notations and results
  from \cite{MR2415394}.

  $(n-1)u^2\Id\in\mathcal{C}$ since $\mathcal{C}$ is a curvature cone,
  $2u\ric(\Lop)\wedge\id\in\mathcal{C}$ whenever $\Lop\in\mathcal{C}$
  by condition $(\ast)$ and the fact that $u$ is nonnegative,
  $Q(\Lop)\in T_{\Lop}\mathcal{C}$ whenever $\Lop\in\mathcal{C}$ since
  $\mathcal{C}$ is Ricci flow invariant. Thus we have that
  $(\partial_t-\Delta)\Lop\in T_{\Lop}\mathcal{C}$ whenever
  $\Lop\in\mathcal{C}$. Hamilton's maximum principe implies that
  $\Lop(t)\in\mathcal{C}$ if $\Lop(0)\in\mathcal{C}$.
\end{proof}

\section{A variation on Koch and Lamm's Ricci flow of $C^0$ metrics.}
\label{sec:lower-bounds-curv}
We review here the theory of Ricci flow of $C^0$ metrics develloped by
Koch and Lamm in \cite{zbMATH06537677} and \cite{zbMATH06060679}. The
precise version of the theory we need here is formulated in terms of Ricci
flow background rather than a static background metric, and is very
similar to the theory developped by Deruelle and Lamm in
\cite{2016arXiv160100325D} formulated with an expanding Ricci soliton
as a background. 

Let $g(t)_{t\in[0,T)}$ be a Ricci flow on a compact manifold $M$. We
consider the following problem:
\begin{equation}
\begin{cases}
  \partial_t\bar{g}=-2\ric_{\bar{g}}+\mathcal{L}_{W(\bar{g},g)}\bar{g}\\
  \bar{g}(0)=\bar{g}_0
\end{cases}\tag{RicDT}\label{eq:RicciDeT}
\end{equation}
where $W(\bar{g},g)$ is the vector field given in local coordinates
by: $W^k=g^{ij}\left(\bar\Gamma_{ij}^k-\Gamma_{ij}^k\right)$ where
$\bar{\Gamma}_{ij}^k$ and $\Gamma_{ij}^k$ are the Christoffel symbols
of $\bar{g}(t)$ and $g(t)$ (respectively).

If we set $h=\bar{g}-g$, a computation shows that the equation above is equivalent
to:
\begin{align*}
  (\partial_t-\Delta_{L,g(t)})h =&\nabla_l\left (
    \left ((g+h)^{lm}-g^{lm}\right)\nabla_mh_{ik}\right)\\
  &+\bar{g}^{-1}\circledast
  \bar{g}^{-1}\circledast\nabla h\circledast\nabla h\\
  &+\left((g+h)^{-1}-g^{-1}\right)\circledast R_{g(t)} \circledast h\\
  =& \mathcal{F}[h]
\end{align*}
where $\nabla$ denotes the covariant derivative with respect to
$g(t)$, $R_{g(t)}$ is the curvature tensor of $g(t)$ 
and $\Delta_{L,g(t)}$ is the Lichnerowicz Laplacian with respect to
$g(t)$ (see \cite{MR2274812}, p. 109).
If $A$ and $B$ are tensors, $A \circledast B$ denotes any tensor built
by tracing $A\otimes B$.

Following \cite{zbMATH06060679}, \cite{zbMATH06537677} and
\cite{2016arXiv160100325D}, let $X_{T'}$ be the space of time dependent symmetric $2$-tensors $h$ which
satisfy:
\[\|h\|_{X}=\sup_{M\times [0,T')}|h|+\sup_{(x,R^2)\in M\times
             (0,T')}\left(R\|\bar\nabla h\|_{2,x,R}+\|\sqrt{t}\bar\nabla
             h\|^+_{n+4,x,R} \right)<+\infty \]
where:
\begin{align*}
\|h\|_{p,x,R}&=\left(
    \frac{\displaystyle\int_0^{R^2}\int_{B_{\bar{g}(s)}(x,R)}|h|^pdv_{\bar
               g(s)}ds}{\displaystyle\int_0^{R^2}\vol_{\bar
               g(s)}B_{\bar{g}(s)}(x,R)ds}\right)^{1/p}\\
\|h\|^+_{p,x,R}&=\left(
    \frac{\displaystyle\int_{\tfrac{1}{2}R^2}^{R^2}\int_{B_{\bar{g}(s)}(x,R)}|h|^pdv_{\bar
               g(s)}ds}{\displaystyle\int_{\tfrac{1}{2}R^2}^{R^2}\vol_{\bar
               g(s)}B_{\bar{g}(s)}(x,R)ds}\right)^{1/p}.
\end{align*}

For a fixed initial condition $h_0$ small enough in $C^0$ norm, one
can consider the map $\Phi$, 
defined on $X_{T'}$ for $T'$ small enough, which sends a small enough time dependent
symmetric $2$-tensor $k(t)$ to the solution $h(t)$ of the linear problem:
\[
\begin{cases}
  (\partial_t-\Delta_{L,g(t)})h =\mathcal{F}[k]\\
  h(0)=h_0.
\end{cases}
\]
\begin{lem}
  Provided $T'$ and the $C^0$ norm of $h_0$ are small enough,
  $\Phi:X_{T'}\to X_{T'}$ is a strict contraction from a small ball in
  $X_{T'}$ to itself.
\end{lem}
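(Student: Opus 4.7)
The plan is the standard Koch--Lamm fixed-point argument for Ricci--DeTurck flow, transported from a static (or soliton) background to the Ricci-flow background $g(t)$, exploiting the fact that every term of $\mathcal{F}[k]$ is at least quadratic in $k$. The first step is a linear parabolic estimate: for any source $F$ (possibly in divergence form) and initial datum $h_0$, the unique solution of $(\partial_t-\Delta_{L,g(t)})h=F$ with $h(0)=h_0$ satisfies $\|h\|_{X_{T'}} \leq C\bigl(\|h_0\|_{C^0}+N(F)\bigr)$ in an appropriate source-side norm $N$. Because $g(t)$ is smooth on the compact manifold $M$ and has uniformly bounded geometry on $[0,T']$, the Lichnerowicz heat kernel admits Gaussian bounds and the parabolic $L^p$ theory goes through; for the divergence-form piece of $\mathcal{F}[k]$ one integrates by parts against the heat kernel so that only one derivative falls on the source, exactly as in \cite{zbMATH06537677} and \cite{2016arXiv160100325D}.

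The second step checks that $\mathcal{F}$ sends small $X_{T'}$-balls into the source norm $N$ with a gain. The three pieces of $\mathcal{F}[k]$ are: a divergence-form term whose coefficient $(g+k)^{-1}-g^{-1}$ is $O(|k|)$, a pure $\bar\nabla k\otimes\bar\nabla k$ term controlled by Hölder's inequality from the $L^{n+4}$ part of $\|\cdot\|_{X}$ (which is precisely why the exponent $n+4$ appears in the norm), and a curvature term $((g+k)^{-1}-g^{-1})\circledast R_{g(t)}\circledast k$ which is $O(|k|^2)$ pointwise and which absorbs a power of $T'$ because it involves no derivatives of $k$. Combining with Step 1 gives
\[\|\Phi(k)\|_{X_{T'}} \leq C\|h_0\|_{C^0} + C\bigl(\|k\|_{X_{T'}}^2+T'\|R_{g(t)}\|_{\infty}\|k\|_{X_{T'}}^2\bigr),\]
and the same computation on $\mathcal{F}[k_1]-\mathcal{F}[k_2]$, which is linear in $k_1-k_2$ with coefficients $O(\|k_1\|_{X_{T'}}+\|k_2\|_{X_{T'}})$, yields
\[\|\Phi(k_1)-\Phi(k_2)\|_{X_{T'}} \leq C\bigl(\|k_1\|_{X_{T'}}+\|k_2\|_{X_{T'}}+T'\|R_{g(t)}\|_{\infty}\bigr)\|k_1-k_2\|_{X_{T'}}.\]
Choosing parameters in the right order --- first a radius $r>0$ with $Cr\leq 1/4$, then $T'$ small enough that $CT'\|R_{g(t)}\|_{\infty}\leq 1/4$, and finally $\|h_0\|_{C^0}$ small enough that $C\|h_0\|_{C^0}\leq r/2$ --- makes $\Phi$ a self-map of the closed $X_{T'}$-ball of radius $r$ with Lipschitz constant $\leq 1/2$.

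The main obstacle is really Step 1 in the precise form of the $X_{T'}$-norm. On a static background these estimates are carried out in detail in \cite{zbMATH06537677}; on the Ricci-flow background $g(t)$ the Lichnerowicz heat operator acquires extra first- and zero-order terms coming from the $t$-dependence of $g$, and one must verify that these do not spoil either the Gaussian heat kernel bounds used for the $\sup$ and $L^2$-gradient pieces of the norm, nor the Moser/De Giorgi-type interior estimate used for the $L^{n+4}$-gradient piece. Since $g(t)$ is smooth with bounded curvature and bounded covariant derivatives on $[0,T']$, these extra terms are harmless perturbations and the arguments of \cite{zbMATH06537677} and \cite{2016arXiv160100325D} adapt essentially verbatim.
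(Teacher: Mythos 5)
Your proposal follows exactly the route the paper itself indicates: the paper gives no detailed proof of this lemma, deferring to the contraction argument of Theorem 4.3 in Koch--Lamm \cite{zbMATH06060679} as adapted to an evolving background in \cite{2016arXiv160100325D}, and your sketch (linear estimate in the $X_{T'}$-norm via heat-kernel/divergence-form estimates, quadratic bounds on $\mathcal{F}$, then Banach fixed point with the parameters chosen in the right order) is precisely that argument. It is correct and consistent with the paper's treatment.
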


The proof
of this result follows the same route as the proof of Theorem 4.3 in
\cite{zbMATH06060679}, with adjustments needed to handle the fact that
the background metric is evolving by Ricci flow. These adjustments
have been carried out in details in \cite{2016arXiv160100325D}, though
our situation requires much less delicate estimates since we are only
interested in short time existence and uniqueness, whereas
\cite{2016arXiv160100325D} study the long time behavior of the solutions.

Banach's fixed point theorem can then be applied to show that $\Phi$
has a unique fixed 
point and get the following existence and uniqueness theorem:

\begin{thm}\label{thm:koch-lamm}
  Let $(M,g_0)$ be a compact smooth manifold with a smooth riemannian metric,
  let $(g(t))_{t\in[0,T)}$ be the smooth Ricci flow starting from $g_0$.

  Then there exists $\eps>0$ and $T'\in (0,T)$ such that if $\bar{g}_0$ is a metric with 
  $|\bar{g}_0-g_0|<\eps$ then there exists a solution $(\bar{g}(t))_{t\in[0,T')}$ to
  the equation:
  \[\partial_t\bar{g}(t)=-2\ric_{\bar{g}(t)}+\mathcal{L}_{W(\bar{g}(t),g(t))}\bar{g}(t)\]
  with $\bar{g}(0)=\bar{g}_0$.

  Moreover for every $k>0$,  there exist constants $C_k>0$ such that:
  \begin{equation}
    t^{k/2}\sup_M|\nabla^k(\bar{g}(t)-g(t))|\leq C_k\sup_M|\bar{g}_0-g_0|\label{eq:HigherOrder}  
  \end{equation}
  for any $t\in(0,T')$.

  The solution $\bar{g}(t)$ is unique among all solutions which satisfy
  the above estimate for $k=0$ and $k=1$.
\end{thm}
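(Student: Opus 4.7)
The plan is to run the Banach fixed point argument that has already been set up: given $\bar{g}_0$ with $\|\bar{g}_0-g_0\|_{C^0}<\eps$, I look for a solution in the form $\bar{g}(t)=g(t)+h(t)$ where $h(0)=h_0:=\bar{g}_0-g_0$ and $h$ solves the semilinear parabolic system $(\partial_t-\Delta_{L,g(t)})h=\mathcal{F}[h]$. By the preceding lemma, for $\eps$ and $T'$ small enough the map $\Phi$ is a strict contraction from a small ball in $X_{T'}$ into itself; the contraction constant and the size of the invariant ball can be chosen uniformly in terms of $\eps$ and of bounds on the fixed background flow $g(t)$ on $[0,T']$. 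Banach's fixed point theorem then produces a unique fixed point $h\in X_{T'}$, and $\bar{g}=g+h$ is the desired solution of \eqref{eq:RicciDeT}.

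Next I would upgrade the regularity. The definition of $X_{T'}$ already provides control on $h$, $\bar\nabla h$ and on the parabolic scaling-invariant Morrey norm of $\sqrt{t}\bar\nabla h$. Writing the equation as a linear Lichnerowicz heat equation with right-hand side $\mathcal{F}[h]$ and viewing $\mathcal{F}[h]$ as a source that is controlled by $X_{T'}$-data, I apply standard interior parabolic regularity (in the form used by Koch--Lamm and Deruelle--Lamm) to the $g(t)$-dependent linear equation to obtain, step by step, bounds of the form $t^{k/2}|\nabla^k h|\leq C_k\|h_0\|_{C^0}$. The induction proceeds by differentiating the equation, absorbing the lower-order contributions using the inductive hypothesis and the smoothness of the background flow $g(t)$, and then invoking parabolic Schauder/$L^p$ estimates on small parabolic cylinders. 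The smallness of $\|h_0\|_{C^0}$ ensures $g+h$ stays uniformly equivalent to $g$, so all norms computed with respect to $g(t)$ and with respect to $\bar{g}(t)$ are comparable.

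For uniqueness, suppose $\bar{g}_1$ and $\bar{g}_2$ are two solutions on $[0,T')$ both satisfying \eqref{eq:HigherOrder} for $k=0$ and $k=1$. Set $h_i=\bar{g}_i-g$ and $k=h_1-h_2$. Subtracting the two equations, $k$ satisfies a linear parabolic equation of the form $(\partial_t-\Delta_{L,g(t)})k=A\ast\nabla^2 k+B\ast\nabla k+C\ast k$ where the coefficients $A,B,C$ are controlled by the $k=0,1$ estimates for the $h_i$ and are small when $\eps$ is small; hence $k\in X_{T'}$ with $k(0)=0$, and the same contraction argument (applied now to $\Phi_1-\Phi_2$ on the difference) forces $k\equiv 0$ on a short time interval, which is then iterated to cover $[0,T')$ by a standard open-closed argument.

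The main obstacle I expect is the higher-order bootstrap \eqref{eq:HigherOrder}: one has to show that the $X_{T'}$-norm really controls $\nabla h$ up to $t=0$ with the correct $t^{k/2}$-weight even though the background metric itself is only smooth on $(0,T)$ and even though the nonlinearity $\mathcal{F}[h]$ mixes zeroth, first and second derivatives of $h$ with the curvature of $g(t)$. The cleanest way is to follow the Koch--Lamm/Deruelle--Lamm scheme of rescaling parabolic cylinders of size $\sqrt{t}$, using the compactness of $M$ and the smoothness of $g(t)$ on $[0,T']$ to treat the background as essentially frozen at each scale, and then absorbing the $\mathcal{F}$-terms into the linear estimate via the smallness of $\|h\|_X$.
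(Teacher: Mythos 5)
Your proposal follows essentially the same route as the paper: a Banach fixed point argument for $\Phi$ on a small ball of $X_{T'}$ using the contraction lemma, followed by the Koch--Lamm/Deruelle--Lamm parabolic bootstrap on cylinders of scale $\sqrt{t}$ for the weighted estimates \eqref{eq:HigherOrder}, with uniqueness coming from the same contraction applied to the difference of two solutions lying in the ball. The paper itself only sketches these steps and defers the details to \cite{zbMATH06060679} and \cite{2016arXiv160100325D}, so your outline is, if anything, slightly more explicit than the original.
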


\section{Hölder continuity of the heat flow in a Ricci flow background}
\label{sec:hold-cont-heat}
In this section, we will show an estimate on solutions to the heat
equation which will be used to control how fast a solution can deviate
from its initial contion.

We start with the following well known lemma:
\begin{lem}\label{prop:Gradient}
  Let $(M,g(t))$ be a solution to the Ricci flow and $u(t)$ be a $C^2$
  solution to the heat equation $\partial_tu=\Delta_{g(t)}u$ then :
  \[\sup_M|\nabla u(t)|\leq\sup_M|\nabla u(0)|.\]
\end{lem}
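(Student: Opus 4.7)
The plan is to derive a parabolic evolution equation for $|\nabla u|^2$ and apply the scalar maximum principle on the compact manifold $M$. The key point is that under the Ricci flow background the ``bad'' Ricci terms that appear from both differentiating $g^{ij}$ in time and applying Bochner's identity cancel exactly, leaving a manifestly nonpositive reaction term.

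Concretely, I would compute
\[
\partial_t |\nabla u|^2 = \partial_t\bigl(g^{ij}\nabla_i u\,\nabla_j u\bigr) = 2\,\ric(\nabla u,\nabla u) + 2\ps{\nabla(\Delta_{g(t)} u)}{\nabla u},
\]
using that $\partial_t g^{ij} = 2R^{ij}$ along the Ricci flow and that $\partial_t u = \Delta_{g(t)} u$. Then Bochner's formula gives
\[
\Delta_{g(t)}|\nabla u|^2 = 2|\nabla^2 u|^2 + 2\ps{\nabla(\Delta_{g(t)} u)}{\nabla u} + 2\,\ric(\nabla u,\nabla u).
\]
Subtracting these two expressions, the gradient-of-Laplacian terms and the Ricci terms cancel, yielding
\[
(\partial_t - \Delta_{g(t)})|\nabla u|^2 = -2|\nabla^2 u|^2 \leq 0.
\]

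Since $M$ is compact and $u$ is $C^2$, the function $(t,x)\mapsto |\nabla u(t,x)|^2$ is continuous on $[0,t_0]\times M$ for any $t_0 < T$, so the scalar parabolic maximum principle applies and shows that $\sup_M |\nabla u(t)|^2$ is nonincreasing in $t$. Taking square roots gives the stated bound. The only delicate point to check is the identity $\partial_t g^{ij} = 2R^{ij}$ and the correct form of Bochner's formula for a time-dependent metric, both of which are entirely routine; no real obstacle is to be expected since the Ricci-flow background was tailor-made to produce the cancellation.
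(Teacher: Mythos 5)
Your proof is correct and is essentially identical to the paper's: the paper likewise observes that Bochner's formula combined with the Ricci flow evolution of $g^{ij}$ gives $(\partial_t-\Delta_{g(t)})|\nabla u|^2=-2|\nabla^2u|^2\leq 0$ and concludes by the maximum principle. Your version just spells out the cancellation of the Ricci terms that the paper leaves implicit.
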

\begin{proof}
  Once one notices that Bochner's formule implies 
  \[(\partial_t-\Delta_{g(t)})|\nabla u(t)|^2=-2|\nabla^2u(t)|^2\leq 0,\] 
  this is a straightforward consequence of the maximum principle.
\end{proof}
\begin{prop}\label{prop:Hessian}
  Let $(M,g(t))$ be Ricci flow such that $\sup_M|\R_{g(t)}|\leq A/t$
  for some $A>0$, let $u(t)$ be a $C^2$ solution to
  $\partial_tu=\Delta_{g(t)}u$. Then for every $\alpha>1/2$ there
  exists $T'(A,\alpha)\in (0,T)$ such that for every $t\in(0,T')$:
  \[\sup_M|\nabla^2u(t)|\leq \frac{\sup_M|\nabla u(0)|}{t^\alpha}.\]
\end{prop}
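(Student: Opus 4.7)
The plan is a Bernstein-type argument: set $F(t,x) = t^{2\alpha}|\nabla^2 u(t,x)|^2 + |\nabla u(t,x)|^2$, show $(\partial_t - \Delta)F \leq 0$ on a small time interval, and conclude via Hamilton's maximum principle on the compact manifold $M$. Parabolic regularity ensures $u(t,\cdot)$ is smooth in space for $t > 0$, so all derivatives below are classical.

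The key step is to control the evolution of $|\nabla^2 u|^2$ along the Ricci flow. Writing $h_{ij} := \nabla_i\nabla_j u$, one commutes $\partial_t$ and $\Delta$ with $\nabla^2$, using $\partial_t\Gamma^k_{ij} = -(\nabla_iR^k_j + \nabla_jR^k_i - \nabla^kR_{ij})$ together with the Bochner--Weitzenböck identity for $1$-forms and the contracted second Bianchi identity $\nabla^l R_{lij}{}^m = \nabla_j R^m_i - \nabla^m R_{ij}$. The first-derivative-of-curvature contributions from $\partial_t\Gamma$ cancel exactly against those coming from $[\Delta,\nabla^2]$, leaving a clean schematic identity
\[(\partial_t - \Delta_{g(t)})h = \R_{g(t)}\circledast h.\]
Pairing with $h$ and invoking the $\nabla^*\nabla$ Bochner formula for $|h|^2$, combined with the hypothesis $|\R_{g(t)}|\leq A/t$, yields
\[(\partial_t - \Delta)|\nabla^2 u|^2 \leq -2|\nabla^3 u|^2 + \frac{CA}{t}|\nabla^2 u|^2\]
for a dimensional constant $C$.

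Together with $(\partial_t - \Delta)|\nabla u|^2 = -2|\nabla^2 u|^2$ from Lemma~\ref{prop:Gradient}, this gives
\[(\partial_t - \Delta)F \leq \bigl((2\alpha + CA)\,t^{2\alpha - 1} - 2\bigr)|\nabla^2 u|^2 - 2t^{2\alpha}|\nabla^3 u|^2.\]
Because $\alpha > 1/2$, the exponent $2\alpha - 1$ is positive and $t^{2\alpha - 1}\to 0$ as $t\to 0^+$; choosing $T'(A,\alpha)\in(0, T)$ small enough that $(2\alpha + CA)(T')^{2\alpha - 1}\leq 2$, one has $(\partial_t - \Delta)F \leq 0$ on $M\times(0, T')$, and Hamilton's maximum principle yields $\sup_M F(t,\cdot) \leq \sup_M F(0,\cdot) = \sup_M|\nabla u(0)|^2$. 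Dropping the nonnegative $|\nabla u|^2$ term then gives $t^{2\alpha}|\nabla^2 u(t)|^2 \leq \sup_M|\nabla u(0)|^2$, which is the claim. The main subtlety is the Bianchi-driven cancellation of the $\nabla\R$ contributions in $(\partial_t - \Delta)h$: without it one would be forced to invoke Shi-type estimates and a Duhamel correction producing a nonsharp constant in front of $\sup_M|\nabla u(0)|$; the strict inequality $\alpha > 1/2$ is then used precisely to absorb the prefactor $2\alpha + CA$ into the $-2$ coming from the evolution of $|\nabla u|^2$ at small time.
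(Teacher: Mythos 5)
Your argument is correct and is essentially identical to the paper's proof: the paper uses the same auxiliary function $F(t)=t^{1+\delta}|\nabla^2u|^2+|\nabla u|^2$ with $\delta=2\alpha-1$ (i.e. $t^{2\alpha}$), the same Hessian evolution identity $(\partial_t-\Delta)\nabla^2u=\R\circledast\nabla^2u$ (quoted from Chow--Lu--Ni, Lemma 2.33, which encapsulates the Bianchi cancellation you sketch), the same smallness condition on $T'$, and the same application of the maximum principle.
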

\begin{proof}
  We first compute, using the evolution equation for the Hessian given
  in \cite{MR2274812} Lemma 2.33:
  \begin{align*}
    (\partial_t-\Delta_{g(t)})|\nabla^2u|^2
    &=\ric_{g(t)}\ast\nabla^2u\ast\nabla^2u+2\ps{(\Delta_L-\Delta)\nabla^2u}{\nabla^2u}-2|\nabla^3u|^2\\ 
    & =\R_{g(t)}\ast \nabla^2u\ast\nabla^2u -2|\nabla^3u|^2\\ 
    & \leq \frac{C_nA}{t}|\nabla^2u|^2
  \end{align*}
  where $C_n$ is a dimensional constant.

  We now fix $\alpha>1/2$ and set $\delta=2\alpha-1>0$ and 
  $F(t)=t^{1+\delta}|\nabla^2u|^2+|\nabla u|^2$. $F$ satisfies:
  \[(\partial_t-\Delta_{g(t)})F\leq
    \left(C_nAt^{\delta}+(1+\delta)t^{\delta}-2\right)|\nabla^2 u|^2.\]
  The right hand side is negative for $t\leq
  \left(\tfrac{2}{C_nA+1+\delta}\right)^{1/\delta}=T'$. Thus the
  maximum principle implies that $\sup_MF(t)\leq
  \sup_MF(0)=\sup_M|\nabla u(0)|^2$ for $t\in (0,T')$. 

  Since
  $t^{1+\delta}|\nabla^2u(t)|^2\leq F(t)$, we have the required estimate.
\end{proof}

\begin{prop}\label{HolderHeatFlow}
  Let $\bar{g}(t)$ be a solution to equation \eqref{eq:RicciDeT} given
  by Theorem \ref{thm:koch-lamm}, and
  $\bar u$ be a $C^2$ solution to:
  \[\partial_t\bar u=\Delta_{\bar g(t)}\bar u+\ps{W}{\bar\nabla\bar{u}}.\]
  Then for every $\beta\in (0,\tfrac{1}{2})$, there exist constants
  $C,T'>0$ depending only on $\sup_M|\bar g_0-g_0|$ and
  $\sup_M|\bar\nabla\bar u(0)|$
  such that:
  \[\sup_M|\bar u(t)-\bar u(0)|\leq Ct^\beta\]
  for $t\in[0,T')$.
\end{prop}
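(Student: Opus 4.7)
The plan is to apply the DeTurck trick, converting the heat equation with drift for $\bar{u}$ into a pure heat equation on a genuine Ricci flow, so that Lemma \ref{prop:Gradient} and Proposition \ref{prop:Hessian} apply directly. Concretely, let $\varphi_t : M \to M$ be the family of diffeomorphisms solving $\partial_t\varphi_t = -W(\bar{g}(t),g(t))\circ\varphi_t$ with $\varphi_0 = \id$, set $\tilde{g}(t) := \varphi_t^*\bar{g}(t)$, and $u(t,x) := \bar{u}(t,\varphi_t(x))$. A standard computation using $\partial_t\bar{g} = -2\ric_{\bar{g}} + \mathcal{L}_W\bar{g}$ gives $\partial_t\tilde{g} = -2\ric_{\tilde{g}}$, so $\tilde{g}$ is a smooth Ricci flow with $\tilde{g}(0) = \bar{g}_0$; and the drift term cancels against $d\bar{u}(\partial_t\varphi_t)$, yielding $\partial_t u = \Delta_{\tilde{g}(t)}u$ with $u(0,\cdot) = \bar{u}(0,\cdot)$.

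To apply Proposition \ref{prop:Hessian} to $u$, I would first establish $|\R_{\tilde{g}(t)}| \leq A/t$ on a short interval $[0,T')$. Since $\tilde{g}$ is isometric to $\bar{g}$, this reduces to bounding $|\R_{\bar{g}(t)}|$. The difference $\R_{\bar{g}} - \R_g$ is schematically of the form $\bar{g}^{-1}\circledast\nabla^2 h + \bar{g}^{-1}\circledast\bar{g}^{-1}\circledast\nabla h\circledast\nabla h$ with $h = \bar{g}-g$; estimate \eqref{eq:HigherOrder} with $k=1,2$ gives $|\nabla h|^2 \leq C/t$ and $|\nabla^2 h| \leq C/t$, and together with the bounded curvature of the smooth background $g(t)$ on $[0,T']$ this yields $|\R_{\bar{g}(t)}| \leq A/t$ with $A$ depending only on $\sup_M|\bar{g}_0-g_0|$. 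Similarly, \eqref{eq:HigherOrder} with $k=1$ gives $\sup_M|W(t)| \leq C/\sqrt{t}$, which makes $W$ integrable on $[0,t]$ so that the flow $\varphi_t$ is well-defined and satisfies $d(x,\varphi_t(x)) \leq C\sqrt{t}$ with respect to a fixed background metric.

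For any $\alpha \in (1/2, 1)$ and $t \in (0,T'')$ with $T''$ depending only on $A$ and $\alpha$, Lemma \ref{prop:Gradient} and Proposition \ref{prop:Hessian} applied to $u$ on $\tilde{g}$ give
\[
\sup_M|\tilde\nabla u(t)| \leq \sup_M|\bar\nabla\bar{u}(0)|, \qquad \sup_M|\tilde\nabla^2 u(t)| \leq \frac{\sup_M|\bar\nabla\bar{u}(0)|}{t^\alpha}.
\]
To pass from $u$ back to $\bar{u}$, I would write, for each $x \in M$ and $y = \varphi_t(x)$,
\[
\bar{u}(t,y) - \bar{u}(0,y) = \bigl[u(t,x) - u(0,x)\bigr] + \bigl[u(0,x) - u(0,\varphi_t(x))\bigr],
\]
and bound the first bracket by $\int_0^t \sup_M|\Delta_{\tilde{g}(s)}u|\,ds \leq C'\,t^{1-\alpha}$ and the second by $\sup_M|\bar\nabla\bar{u}(0)| \cdot d(x,\varphi_t(x)) \leq C''\sqrt{t}$. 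Given $\beta \in (0,1/2)$, choosing $\alpha = 1-\beta \in (1/2, 1)$ yields the claimed Hölder estimate $\sup_M|\bar{u}(t) - \bar{u}(0)| \leq Ct^\beta$.

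The main obstacle will be the curvature-control step: one must leverage the higher-order estimates \eqref{eq:HigherOrder} from Theorem \ref{thm:koch-lamm} to show $|\R_{\tilde{g}(t)}| \leq A/t$ with $A$ depending only on $\sup_M|\bar{g}_0-g_0|$, so that Proposition \ref{prop:Hessian} applies on a time interval depending only on that quantity and $\alpha$. Once this is in place, the rest is bookkeeping with the maximum principle estimates and the change of variables $\varphi_t$, both of whose relevant quantities ($|W|$ and displacement) are controlled precisely by the same higher-order estimates.
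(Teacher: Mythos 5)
Your proposal is correct and follows essentially the same route as the paper: pull back by the flow of $-W$ to obtain a genuine Ricci flow and a pure heat equation, derive $|\R|\leq A/t$ and $|W|\leq C/\sqrt{t}$ from estimate \eqref{eq:HigherOrder}, apply Lemma \ref{prop:Gradient} and Proposition \ref{prop:Hessian}, and integrate in time. The only (immaterial) difference is at the end, where the paper integrates the drift-heat equation for $\bar u$ directly, using $|W|\,|\bar\nabla\bar u|\leq C t^{-1/2}$, whereas you integrate the pure heat equation for the pulled-back function and account for the drift via the displacement $d(x,\varphi_t(x))\leq C\sqrt{t}$ — the two bookkeepings are equivalent.
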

\begin{proof}
  Let $W=W(\bar g,g)$ be the vector field built from the solution
  $\bar g(t)$ to equation \eqref{eq:RicciDeT} given by Theorem
  \ref{thm:koch-lamm}. Let us remark that $|\R_{\bar g(t)}|\leq A/t$
  and $|W(\bar g(t),g(t))|\leq B/\sqrt{t}$ thanks to the estimate
  \eqref{eq:HigherOrder}. 
  
  Let $\ph_t$ be the flow of the vector field
  $-W$. Then set $\tilde g(t)=\ph_t^*\bar g(t)$ and $\tilde
  u(t)=\ph_t^*\bar u(t)$. We have that:
  \[
  \begin{cases}
    \partial_t\tilde g=-2\ric_{\tilde{g}}\\
    \partial_t\tilde{u}=\Delta_{\tilde g}\tilde u
  \end{cases}
  \]
  We will have that $|\R_{\tilde g(t)}|\leq A/t$ since the same
  estimate was true for $\bar g(t)$.  Thus we can apply Propositions
  \ref{prop:Gradient} and \ref{prop:Hessian} to get that:
  \[\sup_M|\tilde\nabla\tilde u|\leq C_1,\ \sup_M|\tilde\nabla^2\tilde
  u|\leq \frac{C_2}{t^\alpha}\]
  for any $\alpha >\tfrac{1}{2}$ and $t\leq T(\alpha,A)$. Those same
  estimates will thus hold for $\bar u$. 

  We can then write:
  \begin{align*}
    |\bar{u}(t)-\bar{u}(0)|\leq&\int_0^t|\Delta_{\bar{g}} \bar u|+|\ps{W}{\bar\nabla\bar
    u}|dt\\
  \leq &\int_0^t\sqrt{n}|\bar\nabla^2 \bar u|+|W|\,|\bar\nabla\bar
    u|dt\\
    \leq&\int_0^t\sqrt{n}C_2t^{-\alpha}+C_1Bt^{-1/2}\, dt\\
    \leq&Ct^{1-\alpha}.
  \end{align*}
\end{proof}
\section{Proof of the main result}
\label{sec:proof-main-result}

\begin{proof}[Proof of Theorem \ref{thm:mainthm}]
  In this section we prove Theorem \ref{thm:mainthm}.

  For any lower semi continuous $\underline u:M\to\mathbb{R}$, one can
  find a sequence of $v_k$ of smooth functions such that $v_k\leq
  \underline{u}$ and $v_k$ pointwisely converges to $\underline{u}$.
  Thus we can assume that $\underline{u}:M\to\mathbb{R}$ is
  $C^\infty$.

  Let $g(t)$ be the Ricci flow of $g$ and $g_i(t)$ be the solution to
  equation \eqref{eq:RicciDeT} such that $g_i(0)=g_i$.

  Thanks to Theorem \ref{thm:koch-lamm}, we have that $g_i(t)$
  converges in $C^{\infty}_{loc}((0,T)\times M)$ to $g(t)$. Set
  $u_i(t)$ be the solution to:
  \[\partial_t u_i=\Delta_{g_i(t)}u_i+\ps{W(g(t),g_i(t))}{\nabla
    u_i}\] with $u_i(0)=\underline u$.

  Thanks to the maximum principle $u_i(t)$ is bounded uniformly in
  $i\in \mathbb{N}$. We also have uniform bounds on the Hessian and
  the gradient of each $u_i(t)$ thanks to the results of section
  \ref{sec:hold-cont-heat}. 
  Thus, up to a subsequence, $u_i(t)$
  converges locally uniformly on $(0,T)\times M$ to a solution $u(t)$ of the equation:
  \[\partial_t u=\Delta_{g(t)}u.\]

  Thanks to proposition \ref{HolderHeatFlow}, we have that for each
  $i$, $\sup_M| u_i(t)-\underline{u}|\leq Ct^{1/4}$. Hence $\sup_M|
  u(t)-\underline{u}|\leq Ct^{1/4}$ and $u(t)$ uniformly converges to
  $\underline u$ as $t$ goes to $0$.

  Moreover, up to a pull back by a time dependent diffeomorphism,
  $g_i(t)$ and $u_i(t)$ satisfy the hypothesis of proposition
  \ref{prop:CurvHeatFlow}. Hence we have that
  $\R_{g_i(t)}-u_i(t)\Id\in\mathcal C$.

  Since $g_i(t)$ converges in $C^2$ to $g(t)$ for every fixed $t>0$,
  we have that :
  \[\R_{g(t)}-u(t)\Id\in\mathcal C.\]

  We now let $t$ go to $0$ to get :
  \[\R_{g}-\underline{u}\Id\in\mathcal C.\]

  This ends the proof of Theorem \ref{thm:mainthm}.
\end{proof}
\appendix

\section{Curvature cones and the Ricci flow}
\label{sec:curv-cones-ricci}

We gather here some definitions on Ricci flow invariant curvature
cones for convenience of the reader. For a more detailed exposition of
this topic see \cite{Richard2012-2014}.

Recall that the space of algebraic curvature operators, denoted by $S^2_B\Lambda^2\mathbb{R}^n$, is the space of symmetric
endomorphisms $\R:\Lambda^2\mathbb{R}^n\to\Lambda^2\mathbb{R}^n$ which
satisfy the Bianchi identity: 
\[\ps{\R(x\wedge y)}{z\wedge t}+\ps{\R(y\wedge z)}{x\wedge t}+\ps{\R(z\wedge x)}{y\wedge t}=0.\]

The orthogonal group $O(n)$ acts on $S^2_B\Lambda^2\mathbb{R}^n$ by:
\[\ps{g\cdot\R(x\wedge y)}{z\wedge t}=\ps{\R(gx\wedge gy)}{gz\wedge gt}.\]
\begin{dfn}
  A curvature cone is closed convex cone $\mathcal{C}\subset
  S^2_B\Lambda^2\mathbb{R}^n$ which is $O(n)$ invariant and contains
  the identity operator
  $\Id:\Lambda^2\mathbb{R}^n\to\Lambda^2\mathbb{R}^n$ in its interior.
\end{dfn}
Thanks to the $O(n)$ invariance of $\mathcal{R}$, it makes thanks to
see $\mathcal{C}$ as a subset of $S^2_B\Lambda^2T_xM$ for each point
$x$ in a Riemannian manifold $(M,g)$, and thus it makes sense to say
that the curvature operator $\R_g$ of a Riemannian manifold $(M,g)$
belongs to $\mathcal{C}$.

Recall that, once Uhlenbeck's trick has been applied, the curvature
operator $\R_{g(t)}$ of a Ricci flow $(M,g(t))$ satisfies
\[\partial_t\R_{g(t)}=\Delta_{g(t)}\R_{g(t)}+Q(\R_{g(t)})\]
where $Q(\cdot)$ can be seen as a quadratic vector field
$Q:S^2_B\Lambda^2\mathbb{R}^n\to S^2_B\Lambda^2\mathbb{R}^n$. We will
write $Q(\cdot,\cdot)$ for the associated bilinear map.

With Hamilton's tensor maximum principle in mind, we have the
following definition:
\begin{dfn}
  A curvature cone $\mathcal{C}$ is said to be Ricci flow invariant if
  $Q(\R)\in T_{\R}\mathcal{C}$ whenever $\R\in\mathcal{C}$.
\end{dfn}

Examples of Ricci flow invariant curvature cones include the cones of
curvature operator which are nonnegative or $2$-nonnegative as
symmetric quadratic forms, the cone of NNIC curvature operators and the
related NNIC1 and NNIC2 cones.

\section{Isotropic curvature and Ricci curvature}
\label{sec:isotr-curv-ricci}
If $\R$ is a curvature operator, we set $\R_{ijkl}=\ps{\R(e_i\wedge
  e_j)}{e_k\wedge e_l}$. Recall that $\R$ is said to have nonnegative
isotropic curvature (in short NNIC) if, for any orthonormal $4$-frame
$(e_1,e_2,e_3,e_4)$, we have:
\[\mathcal{IC}_{1234}(\R)=\R_{1313}+\R_{1414}+\R_{2323}+\R_{2424}-2\R_{1234}>0.\]

Recall that a symmetric tensor is said to be $k$-nonnegative if the
sum of its $k$ smallest eigenvalues is nonnegative.

These two lemmas show that that the cone of curvature with nonnegative
isotropic curvature satisfy condition $(\ast)$.
\begin{lem}
  Let $\R$ be a NNIC curvature operator, then $\ric(\R)$ is $4$-nonnegative.
\end{lem}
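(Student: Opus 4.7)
The plan is to deduce $4$-nonnegativity of $\ric(\R)$ from a weighted sum of pointwise NNIC inequalities. Given any orthonormal 4-tuple $(e_1, e_2, e_3, e_4)$, I would extend it to an orthonormal basis $(e_1, \ldots, e_n)$, note that $\ric(\R)(e_i,e_i) = \sum_{j \neq i} \R_{ijij}$, and show $\sum_{i=1}^4 \ric(\R)(e_i,e_i) \geq 0$.

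First I would extract the symmetric form of NNIC: applying it to the frame $(e_a, e_b, e_c, e_d)$ and to $(e_a, e_b, e_c, -e_d)$ and averaging kills the cross term $\R_{abcd}$ (which changes sign), leaving
\[\R_{acac} + \R_{adad} + \R_{bcbc} + \R_{bdbd} \geq 0 \qquad (\star)\]
for every orthonormal 4-tuple.

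Next, for each pair $\{a, b\} \subset \{1, 2, 3, 4\}$, I would sum $(\star)$ over all unordered pairs $\{c, d\} \subset \{1, \ldots, n\} \setminus \{a, b\}$. A direct count shows that each sectional curvature $\R_{aeae}$ with $e \notin \{a, b\}$ appears with multiplicity $n - 3$, and regrouping turns the sum into $(n-3)\bigl(\ric(\R)(e_a,e_a) + \ric(\R)(e_b,e_b) - 2\R_{abab}\bigr) \geq 0$. Since $n \geq 4$, this yields $\ric(\R)(e_a,e_a) + \ric(\R)(e_b,e_b) \geq 2\R_{abab}$. Summing over the six pairs $\{a, b\} \subset \{1, 2, 3, 4\}$, each index appearing in three of them, gives $3 \sum_{a=1}^4 \ric(\R)(e_a,e_a) \geq 2 \sum_{1 \leq a < b \leq 4} \R_{abab}$. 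Finally, applying $(\star)$ to the three bipartitions of $\{e_1, e_2, e_3, e_4\}$ into two orthonormal pairs and adding exhibits $2 \sum_{1 \leq a < b \leq 4} \R_{abab}$ as a sum of nonnegative quantities, closing the chain of inequalities.

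I expect the only real difficulty to be the combinatorial bookkeeping of multiplicities in the two averaging steps; analytically the argument is pure linear manipulation of NNIC, and the dimensional factor $n-3$ remains strictly positive in every dimension $n \geq 4$ where the statement is meaningful.
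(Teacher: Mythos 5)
Your proposal is correct and follows essentially the same route as the paper: symmetrize the NNIC inequality to kill the $\R_{abcd}$ cross term, average over the orthogonal complement of a pair to obtain $\ric(\R)(e_a,e_a)+\ric(\R)(e_b,e_b)\geq 2\R_{abab}$, and then combine these with the symmetrized inequality inside the 4-frame. The only difference is cosmetic bookkeeping in the last step (you sum over all six pairs and three bipartitions, the paper uses four pairs and one bipartition), and your multiplicity counts are correct.
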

\begin{proof}
  Let $(e_i)_{i=1,\dots,n}$ be any orthonormal basis of
  $\mathbb{R}^n$.

  If $\R$ is PIC, we have:
  \[\R_{1i1i}+\R_{1j1j}+\R_{2i2i}+\R_{2j2j}-2\R_{12ij}\geq 0.\]
  Summing this and the same expression obtained by exchanging $e_i$ and
  $e_j$, we get:
  \[\R_{1i1i}+\R_{1j1j}+\R_{2i2i}+\R_{2j2j}\geq 0.\]
  We sum together the $n-3$ terms corresponding to letting $i$ ranging
  from $3$ to $n$, excluding $j$, we get:
  \[\R_{11}+\R_{22}+(n-4)(\R_{1j1j}+\R_{2j2j})-2\R_{1212}\geq 0\]
  where $\R_{ii}$ stands for $\ric(\R)(e_i,e_i)$.

  We now sum over $j$ ranging from $3$ to $n$. This gives:
  \[(n-3)(\R_{11}+\R_{22}-2\R_{1212})\geq 0.\]
  Thus we have $\R_{11}+\R_{22}\geq 2\R_{1212}$. Hence:
  \begin{align*}
    \R_{11}+\R_{22}+\R_{33}+\R_{44}
    =&\tfrac{1}{2}(\R_{11}+\R_{33})+\tfrac{1}{2}(\R_{11}+\R_{44})\\ 
    &+\tfrac{1}{2}(\R_{22}+\R_{33})+\tfrac{1}{2}(\R_{22}+\R_{44})\\ 
    \geq &\R_{1313}+\R_{1414}
    +\R_{2323}+\R_{2424}\\ \geq &0.
  \end{align*}
  Since this inequality is true for any orthonormal $4$-frame
  $(e_1,e_2,e_3,e_4)$, we have that $\ric(\R)$ is $4$-positive.
\end{proof}
\begin{lem}
  Let $A$ be a $4$-nonnegative symmetric endomorphism, then $A\wedge\id$
  is NNIC.
\end{lem}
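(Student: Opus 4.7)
The plan is to reduce the claim to a direct calculation of the isotropic curvature of $A\wedge\id$ on an arbitrary orthonormal $4$-frame, and then invoke the variational characterization of the sum of the smallest eigenvalues of a symmetric endomorphism.

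First I would unfold the definition. For an orthonormal basis $(e_i)$, using $\langle u\wedge v,e_k\wedge e_l\rangle = u_kv_l-u_lv_k$ and the symmetry of $A$, one gets
\[(A\wedge\id)_{ijkl}=\tfrac{1}{2}\bigl(A_{ik}\delta_{jl}-A_{il}\delta_{jk}+\delta_{ik}A_{jl}-\delta_{il}A_{jk}\bigr),\]
where $A_{ij}=\langle Ae_i,e_j\rangle$.

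Next, I would fix an orthonormal $4$-frame $(e_1,e_2,e_3,e_4)$ (completed to an orthonormal basis of $\mathbb{R}^n$ if $n>4$) and plug in. The four ``diagonal'' blocks collapse to
\[(A\wedge\id)_{1313}=\tfrac{1}{2}(A_{11}+A_{33}),\quad (A\wedge\id)_{1414}=\tfrac{1}{2}(A_{11}+A_{44}),\]
\[(A\wedge\id)_{2323}=\tfrac{1}{2}(A_{22}+A_{33}),\quad (A\wedge\id)_{2424}=\tfrac{1}{2}(A_{22}+A_{44}),\]
because only the $\delta_{ii}$ factors survive. The off-diagonal term vanishes: every Kronecker symbol appearing in
\[(A\wedge\id)_{1234}=\tfrac{1}{2}\bigl(A_{13}\delta_{24}-A_{14}\delta_{23}+\delta_{13}A_{24}-\delta_{14}A_{23}\bigr)\]
involves two distinct indices from $\{1,2,3,4\}$. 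Summing gives the clean identity
\[\mathcal{IC}_{1234}(A\wedge\id)=A_{11}+A_{22}+A_{33}+A_{44}.\]

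Finally I would conclude by the Courant--Fischer min-max principle: the minimum of $\sum_{i=1}^4 \langle Ae_i,e_i\rangle$ over orthonormal $4$-frames equals the sum of the four smallest eigenvalues $\lambda_1+\lambda_2+\lambda_3+\lambda_4$ of $A$. By the $4$-nonnegativity of $A$ this is $\geq 0$, hence $\mathcal{IC}_{1234}(A\wedge\id)\geq 0$ for every orthonormal $4$-frame, which is exactly the NNIC condition. There is no real obstacle here; the only thing to be careful about is the correct bookkeeping of the Kronecker symbols so that the cross term $(A\wedge\id)_{1234}$ indeed vanishes, which is what makes the identification with a partial trace of $A$ work.
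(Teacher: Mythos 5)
Your proof is correct and follows essentially the same route as the paper: compute $(A\wedge\id)_{ijij}=\tfrac{1}{2}(A_{ii}+A_{jj})$, observe that $(A\wedge\id)_{1234}=0$, and conclude that $\mathcal{IC}_{1234}(A\wedge\id)=A_{11}+A_{22}+A_{33}+A_{44}\geq 0$ by $4$-nonnegativity. Your explicit appeal to Courant--Fischer just makes precise the step the paper leaves implicit.
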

\begin{proof}
  Recall that if $(e_i,e_j,e_k,e_l)$ come from an orthonormal frame, we have that:
  \[(A\wedge\id)_{ijkl}=\frac{1}{2}\left
      (A_{ik}\delta_{jl}-A_{il}\delta_{jk}+\delta_{ik}A_{jl}-\delta_{il}A_{jk}\right)\]
  which implies that $(A\wedge\id)_{ijkl}=0$ if $i,j,k,l$ are all
  distinct and that
  $(A\wedge\id)_{ijij}=\tfrac{1}{2}\left(A_{ii}+A_{jj}\right)$ if
  $i\neq j$.

  Let $(e_1,\dots,e_4)$ be any orthonormal $4$-frame, and $A$ be a
  symmetric endomorphism. Then:
  \[\mathcal{IC}_{1234}(A\wedge\id)=A_{11}+A_{22}+A_{33}+A_{44}\geq 0\]
  since $A$ is $4$-positive.
\end{proof}
\bibliography{C0stabilityPIC.bib}

\begin{thebibliography}{{Bam}16}

\bibitem[{Bam}16]{zbMATH06609371}
Richard~H. {Bamler}.
\newblock {A Ricci flow proof of a result by Gromov on lower bounds for scalar
  curvature.}
\newblock {\em {Math. Res. Lett.}}, 23(2):325--337, 2016.

\bibitem[BBI01]{MR1835418}
Dmitri Burago, Yuri Burago, and Sergei Ivanov.
\newblock {\em A course in metric geometry}, volume~33 of {\em Graduate Studies
  in Mathematics}.
\newblock American Mathematical Society, Providence, RI, 2001.

\bibitem[BDS17]{2017arXiv170300984B}
J.~{Basilio}, J.~{Dodziuk}, and C.~{Sormani}.
\newblock {Sewing Riemannian Manifolds with Positive Scalar Curvature}.
\newblock {\em ArXiv e-prints}, March 2017.

\bibitem[BS09]{MR2449060}
Simon Brendle and Richard Schoen.
\newblock Manifolds with {$1/4$}-pinched curvature are space forms.
\newblock {\em J. Amer. Math. Soc.}, 22(1):287--307, 2009.

\bibitem[BW08]{MR2415394}
Christoph B\"ohm and Burkhard Wilking.
\newblock Manifolds with positive curvature operators are space forms.
\newblock {\em Ann. of Math. (2)}, 167(3):1079--1097, 2008.

\bibitem[CLN06]{MR2274812}
Bennett Chow, Peng Lu, and Lei Ni.
\newblock {\em Hamilton's {R}icci flow}, volume~77 of {\em Graduate Studies in
  Mathematics}.
\newblock American Mathematical Society, Providence, RI; Science Press Beijing,
  New York, 2006.

\bibitem[DL16]{2016arXiv160100325D}
A.~{Deruelle} and T.~{Lamm}.
\newblock {Weak stability of Ricci expanders with positive curvature operator}.
\newblock {\em ArXiv e-prints}, January 2016.

\bibitem[Gro14]{MR3201312}
Misha Gromov.
\newblock Dirac and {P}lateau billiards in domains with corners.
\newblock {\em Cent. Eur. J. Math.}, 12(8):1109--1156, 2014.

\bibitem[Ham86]{MR862046}
Richard~S. Hamilton.
\newblock Four-manifolds with positive curvature operator.
\newblock {\em J. Differential Geom.}, 24(2):153--179, 1986.

\bibitem[KL12]{zbMATH06060679}
Herbert {Koch} and Tobias {Lamm}.
\newblock {Geometric flows with rough initial data.}
\newblock {\em {Asian J. Math.}}, 16(2):209--235, 2012.

\bibitem[KL15]{zbMATH06537677}
Herbert {Koch} and Tobias {Lamm}.
\newblock {Parabolic equations with rough data.}
\newblock {\em {Math. Bohem.}}, 140(4):457--477, 2015.

\bibitem[LV09]{MR2480619}
John Lott and C\'edric Villani.
\newblock Ricci curvature for metric-measure spaces via optimal transport.
\newblock {\em Ann. of Math. (2)}, 169(3):903--991, 2009.

\bibitem[Ngu10]{MR2587576}
Huy~T. Nguyen.
\newblock Isotropic curvature and the {R}icci flow.
\newblock {\em Int. Math. Res. Not. IMRN}, (3):536--558, 2010.

\bibitem[Ric14]{Richard2012-2014}
Thomas Richard.
\newblock Curvature cones and the ricci flow.
\newblock {\em Séminaire de théorie spectrale et géométrie}, 31:197--220,
  2012-2014.

\bibitem[Stu05]{MR2118836}
Karl-Theodor Sturm.
\newblock Convex functionals of probability measures and nonlinear diffusions
  on manifolds.
\newblock {\em J. Math. Pures Appl. (9)}, 84(2):149--168, 2005.

\end{thebibliography}
\bibliographystyle{alpha}
\end{document}